\theoremstyle{plain}
\newtheorem{proposition}{Proposition}
\theoremstyle{definition}
\newtheorem{definition}[proposition]{Definition}
\theoremstyle{definition}
\newtheorem{question}[proposition]{Question}
\newtheorem{remark}[proposition]{Remark}
\numberwithin{equation}{section}
\numberwithin{proposition}{section}
\gdef\myletter{}
\let\savetheequation\theequation
\def\theequation{\savetheequation\myletter}
\newcommand{\CC}{{\mathbb C}}
\newcommand{\RR}{{\mathbb R}}
\newcommand{\ZZ}{{\mathbb Z}}
\renewcommand{\Re}{\mbox{Re}}
\renewcommand{\date}{\today}
\def \bar{\overline}
\begin{document}


\title[$P-$extremal]{\bf A global domination principle for $P-$pluripotential theory}

\author{Norm Levenberg* and Menuja Perera}{\thanks{*Supported by Simons Foundation grant No. 354549}}
\subjclass{32U15, \ 32U20, \ 31C15}%
\keywords{$P-$extremal function, global domination principle}%

\maketitle
\begin{center} {\bf In honor of 60 years of Tom Ransford} \end{center}
\begin{abstract} We prove a global domination principle in the setting of $P-$pluripotential theory. This has many applications including a general product property for $P-$extremal functions. The key ingredient is the proof of the existence of a strictly plurisubharmonic $P-$potential.
\end{abstract}

\section{Introduction}\label{sec:intro} Following \cite{Bay}, in \cite{BBL} and \cite{BosLev} a pluripotential theory associated to plurisubharmonic (psh) functions on $\CC^d$ having growth at infinity specified by $H_P(z):=\phi_P(\log |z_1|,...,\log |z_d|)$ where 
$$\phi_P(x_1,...,x_d):=\sup_{(y_1,...,y_d)\in P}(x_1y_1+\cdots + x_dy_d)$$
is the indicator function of a convex body $P\subset (\RR^+)^d$ was developed. Given $P$, the classes 
$$L_P=L_P(\CC^d):= \{u\in PSH(\CC^d): u(z)- H_P(z) =0(1), \ |z| \to \infty \}$$
and 
$$L_P^+ =L_P^+(\CC^d):=\{u\in L_P: u(z)\geq H_P(z) +c_u\}$$
are of fundamental importance. These are generalizations of the standard Lelong classes 
$L(\CC^d)$, the set of all plurisubharmonic (psh) functions $u$ on $\CC^d$ with $u(z) - \max[\log |z_1|,...,\log |z_d|] = 0(1), \ |z| \to \infty$, and 
$$L^+(\CC^d)=\{u\in L(\CC^d): u(z)\geq \max[0,\log |z_1|,...,\log |z_d|]  + C_u\}$$
which correspond to $P=\Sigma$ where 
$$\Sigma:=\{(x_1,...,x_d)\in \RR^d: x_1,...,x_d \geq 0, \ x_1+\cdots + x_d\leq 1\}.$$
For more on standard pluripotential theory, cf., \cite{[K]}.

Given $E\subset \CC^d$, the {\it $P-$extremal function of $E$} is defined as $V^*_{P,E}(z):=\limsup_{\zeta \to z}V_{P,E}(\zeta)$ where
$$V_{P,E}(z):=\sup \{u(z):u\in L_P(\CC^d), \ u\leq 0 \ \hbox{on} \ E\}.$$
For $P=\Sigma$, we write $V_E:=V_{\Sigma,E}$. For $E$ bounded and nonpluripolar, $V_E^*\in L^+(\CC^d)$; $V_E^*=0$ q.e. on $E$ (i.e., on all of $E$ except perhaps a pluripolar set); and $(dd^c V_E^*)^d=0$ outside of $\bar E$ where $(dd^c V_E^*)^d$ is the complex Monge-Amp\`ere measure of $V_E^*$ (see section 2). A key ingredient in verifying a candidate function $v\in L^+(\CC^d)$ is equal to $V_E^*$ is the following global domination principle of Bedford and Taylor:

\begin{proposition} \label{domprinc2} \cite{BT88} Let $u\in L(\CC^d)$ and $v\in L^+(\CC^d)$ and suppose $u \leq v$ a.e.-$(dd^c v)^d$. Then $u\leq v$ on $\CC^d$.
\end{proposition}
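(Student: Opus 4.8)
\emph{Proof proposal.}

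The plan is to replace $u$ by $w:=\max(u,v)$ and to prove that $w=v$ on $\CC^d$, which is equivalent to $u\le v$. Since $u\in L(\CC^d)$ and $v\in L^+(\CC^d)$ have the same logarithmic growth at infinity, $w$ again belongs to $L^+(\CC^d)$ and $w-v=O(1)$; moreover $w=v$ exactly on $\{u\le v\}$, and by hypothesis this set carries all the mass of $\mu:=(dd^c v)^d$, i.e.\ $\mu(\{u>v\})=0$. First I would recall the foundational fact that every function in $L^+(\CC^d)$ has total Monge--Amp\`ere mass $(2\pi)^d$, so that $\int_{\CC^d}(dd^c w)^d=\int_{\CC^d}(dd^c v)^d=(2\pi)^d$. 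Next I would establish the measure inequality $(dd^c w)^d\ge (dd^c v)^d$ on all of $\CC^d$: on $\{u>v\}$ it is immediate, since $\mu$ puts no mass there; on the contact set $\{u\le v\}=\{w=v\}$, where $w$ lies above $v$ and touches it, it follows from the Bedford--Taylor calculus for the Monge--Amp\`ere operator of a maximum. Combined with the equality of total masses, $(dd^c w)^d\ge(dd^c v)^d$ forces $(dd^c w)^d=(dd^c v)^d$ as measures on $\CC^d$.

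With the Monge--Amp\`ere measures equal, I would invoke the uniqueness theorem for the Lelong class: two members of $L^+(\CC^d)$ with the same Monge--Amp\`ere measure differ by an additive constant. Hence $w=v+c$ for a constant $c\ge 0$; but $w=v$ on the nonempty set $\{u\le v\}$ (which has full $\mu$-mass), so $c=0$, whence $w=v$ and $u\le v$ everywhere.

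The genuinely substantive input is this uniqueness statement, whose classical proof rests on the global comparison principle for the Lelong class and ultimately on the availability of a smooth strictly plurisubharmonic function in $L^+(\CC^d)$ --- in the present standard setting this is simply $\rho(z)=\tfrac{1}{2}\log(1+|z|^2)$, whose Monge--Amp\`ere measure dominates Lebesgue measure on compact sets; perturbing by $\varepsilon\rho$ converts qualitative comparison statements into quantitative ones and lets a Dirichlet-energy (integration-by-parts) identity finish the argument. This is precisely the mechanism the paper must reconstruct for a general convex body $P$, where the existence of such a $\rho\in L_P^+$ is not evident --- which is why the authors isolate it as the key ingredient. The step I expect to be the main obstacle is making rigorous the behaviour of $(dd^c w)^d$ on the contact set $\{u\le v\}$, and the passage to equality of measures, when $v$ is merely upper semicontinuous so that $\{u\le v\}$ need not be nicely situated: this requires either a decreasing regularization of $v$ within $L^+(\CC^d)$ or the quasicontinuity of plurisubharmonic functions, together with a check that the hypothesis $u\le v$ $\mu$-a.e.\ and the total-mass identity are stable under such approximation.
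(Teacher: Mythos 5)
Your proposal is correct in outline, but it takes a genuinely different route from the one the paper follows: the paper cites \cite{BT88} for this statement and reproduces that argument when proving its generalization (Proposition \ref{gctp}). That argument is a direct one: perturb $u$ to $u+\delta u_0$ with $u_0(z)=\tfrac12\log(1+|z|^2)$ strictly psh, note that if $u(z_0)>v(z_0)$ then for suitable $\delta<\epsilon/2$ the set $S=\{u+\delta u_0>(1+\epsilon)v\}$ is bounded with positive Lebesgue measure, and apply the comparison principle on $S$ to force $(dd^cv)^d(S)>0$, contradicting the a.e.\ hypothesis; the only external input is the local comparison principle. Your route --- pass to $w=\max(u,v)\in L^+(\CC^d)$, establish $(dd^cw)^d\ge(dd^cv)^d$, upgrade to equality of measures via equality of total Monge--Amp\`ere masses in $L^+(\CC^d)$, and conclude by uniqueness --- is sound, and the contact-set step you flag as the main obstacle is in fact handled cleanly by the standard inequality $(dd^c\max(u,v))^d\ge \mathbf{1}_{\{v\ge u\}}(dd^cv)^d+\mathbf{1}_{\{v<u\}}(dd^cu)^d$ combined with the hypothesis $(dd^cv)^d(\{u>v\})=0$; no regularization of $v$ is needed. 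The real caveat is the one you half-acknowledge: the uniqueness theorem (two functions in $L^+(\CC^d)$ with equal Monge--Amp\`ere measures differ by a constant) is itself one of the main theorems of \cite{BT88} and is at least as deep as the domination principle, and in several treatments domination is a step toward uniqueness, so to avoid circularity you must cite a proof of uniqueness that does not pass through domination (the original one in \cite{BT88}, or the Dinew/B{\l}ocki-type arguments). Your scheme is essentially the alternative route the authors mention at the end of Section 3 via Corollary 3.10 of \cite{DDL}, whereas the perturbation argument is the one they actually generalize, precisely because it isolates the single ingredient --- a strictly psh potential in the class --- that they then construct for general $P$.
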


Thus if one finds $v\in L^+(\CC^d)$ with $v=0$ a.e. on $\bar E$ and $(dd^cv)^d=0$ outside of $\bar E$ then $v=V_E^*$. For the proof of Proposition \ref{domprinc2} in \cite{BT88} the fact that in the definition of the Lelong classes $\max[\log |z_1|,...,\log |z_d|] $ and $\max[0,\log |z_1|,...,\log |z_d|] $ can be replaced by the K\"ahler potential 
$$u_0(z):=\frac{1}{2}\log {(1+|z|^2)}:= \frac{1}{2}\log (1+\sum_{j=1}^d |z_j|^2)$$ is crucial; this latter function is strictly psh and $(dd^cu_0)^d>0$ on $\CC^d$. 

We prove a version of the global domination principle for very general $L_P$ and $L_P^+$ classes. We consider convex bodies $P\subset (\RR^+)^d$ satisfying 
\begin{equation}\label{phyp} \Sigma \subset kP \ \hbox{for some} \ k\in \ZZ^+.\end{equation}

\begin{proposition} \label{gctp} For $P\subset (\RR^+)^d$ satisfying (\ref{phyp}), let $u \in L_P$ and $v \in L_P^+$ with $u \leq v$ a.e. $MA(v)$. Then $u \leq v$ in $\CC^d$.
\end{proposition}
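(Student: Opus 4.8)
The plan is to mimic the proof of Proposition \ref{domprinc2} in \cite{BT88}, where the decisive observation is that in the Lelong classes the weight $\max[\log|z_1|,\dots,\log|z_d|]$ can be replaced by the K\"ahler potential $u_0$, which is \emph{strictly} plurisubharmonic with $(dd^cu_0)^d>0$ everywhere. Accordingly, \emph{the heart of the proof — and, I expect, the main obstacle — is Step 1: to produce a strictly plurisubharmonic $g\in L_P^+$} (i.e.\ with $dd^cg$ a strictly positive current, equivalently $(dd^cg)^d$ of strictly positive density), and it is exactly here that hypothesis (\ref{phyp}) is used. When $P$ is a rational polytope this can be done by hand: enlarging $k$ we may assume $kP$ is a lattice polytope, and since $\Sigma\subset kP$ its vertices $0,e_1,\dots,e_d$ lie in $\ZZ^d\cap kP$; with $A:=\ZZ^d\cap kP$ put
\[
g(z):=\frac{1}{2k}\,\log\!\left(\sum_{a\in A}|z^a|^2\right).
\]
Then $g$ is plurisubharmonic (being $\tfrac1{2k}\log$ of a sum of squared moduli of holomorphic monomials); one has $H_P\le g\le H_P+\tfrac1{2k}\log|A|$, using $\mathrm{conv}(A)=kP$ and $\phi_{kP}=k\phi_P$, so $g\in L_P^+$; and $g$ is strictly plurisubharmonic because the monomial map $z\mapsto(z^a)_{a\in A}$ is an immersion of $\CC^d$ (its components include $z_1,\dots,z_d$), so $g$ is a constant multiple of the pullback of the Fubini--Study potential. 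For a general convex body $P$, however, $\mathrm{conv}(\ZZ^d\cap kP)$ is strictly smaller than $kP$ and a limit of the polytopal $g$'s collapses to $H_P$, losing strict positivity; the remedy is instead to solve the complex Monge--Amp\`ere equation $(dd^cg)^d=\mu$ in $L_P^+$ for a \emph{fixed} finite measure $\mu$ of smooth strictly positive density and the correct total mass, whereupon the solution is automatically strictly plurisubharmonic. Establishing this solvability under (\ref{phyp}) is the delicate point.

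With $g$ available, Step 2 follows \cite{BT88}. The hypothesis and the conclusion are invariant under $(u,v)\mapsto(u+s,v+s)$, so we may assume $v\ge 0$. Suppose $u\not\le v$ and set $w:=\max(u,v)\in L_P^+$. Combining the Bedford--Taylor identities for $(dd^c\,\cdot\,)^d$ on maxima (including the contact-set inequality $\mathbf 1_{\{u=v\}}(dd^cw)^d\ge\mathbf 1_{\{u=v\}}(dd^cv)^d$) with the hypothesis $MA(v)(\{u>v\})=0$ and with the equality of total Monge--Amp\`ere masses inside $L_P$ — this is where $g$ enters, the proof being the comparison of an arbitrary $u\in L_P$ with $\max(u,g-R)$ as $R\to\infty$, the analogue of how $u_0$ is used in \cite{BT88} — one obtains that $(dd^cu)^d$, like $(dd^cv)^d$, is carried by $\{u\le v\}$; hence on $\Omega:=\{u>v\}$ both $u$ and $v$ are maximal and $u=v$ on $\partial\Omega$. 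To finish, perturb: for $t\in(0,1)$ let $u_t:=(1-t)u$, which is plurisubharmonic. Since $u\le H_P+C_u$ while $v\ge H_P+c_v$ with $H_P(z)\ge\tfrac1k\max(0,\log|z_1|,\dots,\log|z_d|)\to\infty$ as $|z|\to\infty$ (again by (\ref{phyp})), we have $u_t-v\to-\infty$ at infinity, so $\{u_t>v\}$ is bounded; and $v\ge 0$ forces $\{u_t>v\}\subset\{u>v\}$, so $(dd^cv)^d$ vanishes on the bounded open set $\{u_t>v\}$ and $v$ is maximal there. The maximum principle for maximal plurisubharmonic functions on a bounded domain (the boundary values of $u_t$ on $\partial\{u_t>v\}$ being $\le v$) then forces $u_t\le v$ on $\{u_t>v\}$, which is possible only if $\{u_t>v\}=\emptyset$; so $u_t\le v$ on $\CC^d$ for every $t\in(0,1)$, and letting $t\to 0$ yields $u\le v$, a contradiction.

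Apart from Step 1, everything used is the standard pluripotential calculus on $\CC^d$ — Bedford--Taylor convergence, the comparison principle, the characterization of maximality by vanishing Monge--Amp\`ere measure, the identities for $(dd^c\max(u,v))^d$, and the maximum principle on bounded domains — with the convex body $P$ entering only through the growth conditions defining $L_P,L_P^+$ and through (\ref{phyp}). So I expect essentially all of the genuinely new work, and the only real difficulty, to lie in the construction of the strictly plurisubharmonic $P$-potential $g$, especially for convex bodies $P$ that are not rational polytopes.
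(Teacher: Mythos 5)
Your proposal correctly locates the crux in the construction of a strictly psh member of $L_P^+$, and your lattice\--point construction $g=\frac{1}{2k}\log\sum_{a\in\ZZ^d\cap kP}|z^a|^2$ does work when $P$ is a \emph{rational} polytope. But that is where your Step 1 stops: for irrational polytopes and for general convex bodies satisfying (\ref{phyp}) you only propose to solve a complex Monge--Amp\`ere equation and explicitly flag its solvability as ``the delicate point'' without carrying it out. This is a genuine gap, and it sits exactly where the paper's new content lies. The paper's construction is more elementary than what you anticipate: for an arbitrary polytope it takes $u_P=\frac12\log\bigl(1+\sum_{J}|z^J|^2\bigr)$ summed over the (not necessarily integral) extreme points $J$ of $P$ -- note $\log|z^J|=j_1\log|z_1|+\cdots+j_d\log|z_d|$ is psh for any $J\in(\RR^+)^d$ -- and proves a pointwise lemma that $\log(u+v)$ is strictly shm wherever $\log u$ is strictly shm and $\log v$ is shm, applied with $u=1+|z_1|^{2a_1}+\cdots+|z_d|^{2a_d}$ (hypothesis (\ref{phyp}) guarantees the axis points $a_je_j\in\partial P$). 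A general convex body is then handled by a decreasing approximation by polytopes $P_n\downarrow P$, the uniform presence of the term $1+\sum_j|z_j|^{2a_{nj}}$ preserving strict positivity in the limit; your worry that ``the limit collapses to $H_P$'' applies to your normalization $\frac1{2k}\log(\cdot)$ with $k\to\infty$, not to the paper's fixed\--scale limit. The MA\--equation route you sketch is mentioned in the paper only as an alternative construction due to C.~H.~Lu \cite{CL}.

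Your Step 2 is also a genuinely different route from the paper's, which simply reruns \cite{BT88}: normalize $v\ge u_P$, apply the comparison principle to $u+\delta u_P$ versus $(1+\epsilon)v$ on $S=\{u+\delta u_P>(1+\epsilon)v\}$, deduce $(dd^cv)^d(S)>0$ from the strict positivity of $(dd^cu_P)^d$, and contradict the a.e.\ hypothesis since $\delta<\epsilon/2$ forces $S\subset\{u>v\}$. Two comments on your version. First, the middle portion (total mass equality, the max identities, ``$(dd^cu)^d$ is carried by $\{u\le v\}$'', maximality of $u$) is never used in your endgame: only the vanishing of $(dd^cv)^d$ on $\{u_t>v\}\subset\{u>v\}$, which is immediate from the hypothesis, enters. (Also, equality of total Monge--Amp\`ere masses holds for $L_P^+$, not for all of $L_P$ -- consider $\log|z_1|\in L_\Sigma(\CC^2)$ -- so that portion needs restating anyway.) Second, the decisive step ``$v$ is maximal on the bounded set $D_t:=\{u_t>v\}$ and $u_t\le v$ on $\partial D_t$, hence $u_t\le v$ on $D_t$'' is not a literal application of the definition of maximality: $D_t$ is not relatively compact in any open set on which $(dd^cv)^d$ is known to vanish, and $v$ need not be continuous at $\partial D_t$, so the pointwise boundary inequality does not directly yield the required $\liminf$ condition. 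The standard repair is to run the comparison principle on a large ball $B_R$ containing $\overline{D_t}$ against $u_t+\eta(|z|^2-R^2)$ and use that a nonempty set of the form $\{w_2>w_1\}$ with $w_1,w_2$ psh has positive Lebesgue measure. Once repaired this way, your Step 2 uses no global strictly psh potential at all -- only the local perturbation $|z|^2$ and the boundedness of $D_t$, which comes from (\ref{phyp}) via $H_P\ge\frac1k\max(0,\log|z_1|,\dots,\log|z_d|)$. So either make that repair explicit (in which case you have an argument organized quite differently from the paper's), or surface where $g$ is actually needed; as written, the logical role you assign to Step 1 does not match how Step 2 uses it.
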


As a corollary, we obtain a generalization of Proposition 2.4 of \cite{BosLev} on $P-$extremal functions:

\begin{proposition}  \label{productproperty} Given $P\subset (\RR^+)^d$ satisfying (\ref{phyp}), let $E_1,...,E_d\subset \CC$ be compact and nonpolar. Then
\begin{equation}\label{prodprop} V^*_{P,E_1\times \cdots \times E_d}(z_1,...,z_d)=\phi_P(V^*_{E_1}(z_1),...,V^*_{E_d}(z_d)).\end{equation}
\end{proposition}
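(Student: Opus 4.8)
The plan is to put $E:=E_1\times\cdots\times E_d$ (a compact, nonpluripolar set, being a product of nonpolar compacta) and
\[
\psi(z):=\phi_P\bigl(V^*_{E_1}(z_1),\dots,V^*_{E_d}(z_d)\bigr),
\]
and to prove $V^*_{P,E}=\psi$ by establishing the two inequalities separately, the harder one via Proposition \ref{gctp}. First I would record that $\psi\in L_P^+(\CC^d)$: since $\phi_P(x)=\sup_{p\in P}\langle p,x\rangle$ with every $p\in P$ having nonnegative coordinates, $\psi=\sup_{p\in P}\sum_j p_jV^*_{E_j}(z_j)$ is an upper envelope of psh functions on $\CC^d$, and compactness of $P$ together with upper semicontinuity of the $V^*_{E_j}$ makes this envelope usc, hence psh; the estimate $\psi-H_P=O(1)$ on both sides follows from $V^*_{E_j}\ge 0$, boundedness of $V^*_{E_j}(\zeta)-\log^+|\zeta|$, monotonicity and subadditivity of $\phi_P$, and hypothesis (\ref{phyp}) — this is the $P$-analogue of the corresponding step in the proof of Proposition 2.4 of \cite{BosLev}. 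Since $V^*_{E_j}=0$ quasi-everywhere on $E_j$, we get $\psi=\phi_P(0,\dots,0)=0$ off a pluripolar subset of $E$, so $\psi$ is, up to a negligible set, a competitor in the definition of $V_{P,E}$, giving $\psi\le V^*_{P,E}$.

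For the reverse inequality I would apply Proposition \ref{gctp} with $u:=V^*_{P,E}\in L_P^+$ and $v:=\psi\in L_P^+$: it suffices to verify $V^*_{P,E}\le\psi$ a.e.-$MA(\psi)$. Both $V^*_{P,E}$ and $\psi$ vanish quasi-everywhere on $E$, and $MA(\psi)$ — well defined since $\psi$ is locally bounded — charges no pluripolar set; hence it is enough to show that $MA(\psi)$ is carried by $\bar E=E$. Granting this, $V^*_{P,E}=0=\psi$ a.e.-$MA(\psi)$, so Proposition \ref{gctp} yields $V^*_{P,E}\le\psi$, and together with the previous paragraph this is exactly (\ref{prodprop}).

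It remains to prove $\operatorname{supp}MA(\psi)\subseteq E$, which is the real content. I would first reduce to the case that $P$ is a polytope: choosing polytopes $P_n\uparrow P$ gives $\phi_{P_n}\uparrow\phi_P$ pointwise, hence $\psi_n:=\phi_{P_n}(V^*_{E_1}(z_1),\dots)\uparrow\psi$ with all $\psi_n,\psi$ locally bounded, so $MA(\psi_n)\to MA(\psi)$ weakly by the Bedford--Taylor convergence theorem, and a weak limit of measures supported on the closed set $E$ is supported on $E$. For a polytope $P$ with vertices $v^{(1)},\dots,v^{(m)}$ one has $\psi=\max_{1\le l\le m}g_l$ with $g_l(z)=\sum_j v^{(l)}_jV^*_{E_j}(z_j)$; expanding $(dd^c g_l)^d=\bigl(\sum_j v^{(l)}_j\,dd^c_{z_j}V^*_{E_j}\bigr)^d$, every term in which an index is repeated vanishes (a $(1,1)$-form in the single variable $z_j$ has vanishing square), so only the $d!$ fully mixed terms survive and
\[
MA(g_l)=d!\Bigl(\prod_{j=1}^d v^{(l)}_j\Bigr)\,\mu_{E_1}\times\cdots\times\mu_{E_d},\qquad \mu_{E_j}:=dd^c V^*_{E_j},
\]
which is supported on $E$. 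Thus each $g_l$ is maximal on $U:=\CC^d\setminus E$, and it remains to see that $\psi=\max_l g_l$ is maximal on $U$. Since $U=\bigcup_{j_0}\Omega_{j_0}$ with $\Omega_{j_0}:=\{z:z_{j_0}\notin E_{j_0}\}$ and maximality is a local property, it is enough to prove this on each $\Omega_{j_0}$, where $V^*_{E_{j_0}}$ is harmonic and so, locally, $V^*_{E_{j_0}}(z_{j_0})=\Re F(z_{j_0})$ is pluriharmonic; then each $g_l$, and hence $\psi$, depends on $z_{j_0}$ only through this pluriharmonic function, in a convex, piecewise-linear way. Off the ``tie set'' of the maximum, $\psi$ agrees locally with a single $g_l$, which is pluriharmonic in $z_{j_0}$ and therefore contributes no Monge--Amp\`ere mass, so $(dd^c\psi)^d=0$ there. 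The main obstacle is ruling out Monge--Amp\`ere mass on the tie set; this is carried out exactly as in the classical case $P=\Sigma$ treated in \cite{BosLev}, and it is here that the pluriharmonicity of the $g_l$ in the variable $z_{j_0}$ — not merely their maximality — is essential (a finite maximum of maximal psh functions need not be maximal in general).
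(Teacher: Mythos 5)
Your overall architecture matches the paper's: the easy inequality comes from showing $\psi=\phi_P(V^*_{E_1},\dots,V^*_{E_d})\in L_P$ and $\psi=0$ q.e.\ on $E$, and the hard inequality is reduced, via Proposition \ref{gctp}, to showing that $(dd^c\psi)^d$ puts no mass outside $E$. Your preliminary steps (membership in $L_P^+$, negligibility of pluripolar sets for $MA(\psi)$, locality of maximality, the polytope approximation with Bedford--Taylor convergence, and the computation that each $g_l=\sum_j v^{(l)}_j V^*_{E_j}(z_j)$ has Monge--Amp\`ere measure a constant times $\mu_{E_1}\times\cdots\times\mu_{E_d}$) are all sound.

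The genuine gap is exactly the step you flag as ``the main obstacle'': ruling out Monge--Amp\`ere mass of $\max_l g_l$ on the tie sets. You assert this ``is carried out exactly as in the classical case $P=\Sigma$ treated in \cite{BosLev},'' but that does not transfer. In the classical case each competing function $\max_j V^*_{E_j}(z_j)$ depends on a \emph{single, distinct} variable, and the known arguments (including the one in \cite{BosLev}, which the paper notes works only under a lower-set hypothesis and is ``much different'') exploit that special structure. In your setting, near a tie point in $\Omega_{j_0}$ where two pieces are active one has locally $\psi=g_{l'}+\max\bigl(0,\sum_j c_j V^*_{E_j}(z_j)\bigr)$ with $c_j=v^{(l)}_j-v^{(l')}_j$ of \emph{mixed signs}, so the ``excess'' is a maximum of $0$ and a difference of psh functions; pluriharmonicity of the $g_l$ in $z_{j_0}$ alone does not kill the mass that $dd^c\psi$ concentrates on the real hypersurface $\{g_l=g_{l'}\}$, and no cited result covers this. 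The paper avoids the combinatorics entirely: it smooths $\phi_P$, computes the complex Hessian of $v=\phi_P(V_E,V_F)$ directly, and uses the degree-one homogeneity of $\phi_P$ twice --- once to get $\det H_{\RR}(\phi_P)=0$ away from the origin (handling $(\CC\setminus E)\times(\CC\setminus F)$) and once to get $(\phi_P)_{yy}(0,a)=0$ for $a>0$ (handling the mixed regions where some coordinate lies in its $E_j$). If you want to salvage your route, you would need to prove the tie-set claim, and the natural way to do so is precisely the homogeneity/degenerate-Hessian computation the paper uses; as written, the hardest analytic point of the proposition is asserted rather than proven.
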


The main issue in proving Proposition 1.2 (Proposition \ref{gctp} below) is the construction of a strictly psh $P-$potential $u_P$ which can replace the logarithmic indicator function $H_P(z)$ used to define $L_P$ and $L_P^+$. To do this, we utilize a classical result on subharmonic functions in the complex plane which we learned in Tom Ransford's beautiful book \cite{Rans}; thus it is fitting that this article is written in his honor.

\section{The global $P-$domination principle}\label{sec:back}
 Following \cite{BBL} and \cite{BosLev}, we fix a convex body $P\subset (\RR^+)^d$; i.e., a compact, convex set in $(\RR^+)^d$ with non-empty interior $P^o$. The most important example is the case where $P$ is the convex hull of a finite subset of $(\ZZ^+)^d$ in $(\RR^+)^d$ with $P^o\not =\emptyset$ ($P$ is a non-degenerate convex polytope). Another interesting class consists of the $(\RR^+)^d$ portion of an $\ell^q$ ball for $1\leq q\leq \infty$; see (\ref{ellq}). Recall that $H_P(z):=\phi_P(\log |z_1|,...,\log |z_d|)$ where $\phi_P$ is the indicator function of $P$.
 
A $C^2-$function $u$ on $D\subset \CC^d$ is {\it strictly psh} on $D$ if the complex Hessian $H(u):=\sum_{j,k=1}^d {\partial^2u\over \partial z_j  \partial \bar z_k}$ is positive definite on $D$. We define
$$dd^cu :=2i \sum_{j,k=1}^d {\partial^2u\over \partial z_j  \partial \bar z_k} dz_j \wedge d\bar z_k $$
and 
$$(dd^cu)^d=dd^cu \wedge \cdots \wedge dd^cu=c_d \det H(u) dV$$
where $dV=({i\over 2})^d \sum_{j=1}^d dz_j\wedge d\bar z_j$ is the volume form on $\CC^d$ and $c_d$ is a dimensional constant. Thus $u$ strictly psh on $D$ implies that $(dd^cu)^d=fdV$ on $D$ where $f>0$. We remark that if $u$ is a locally bounded psh function then $(dd^cu)^d$ is well-defined as a positive measure, the {\it complex Monge-Amp\`ere measure} of $u$; this is the case, e.g., for functions $u\in L_P^+$.  

\begin{definition} \label{ppot} We say that $u_P$ is a {\it strictly psh $P-$potential} if
\begin{enumerate}
\item $u_P\in L_P^+$ is strictly psh on $\CC^d$ and 
\item there exists a constant $C$ such that $|u_P(z)-H_P(z)|\leq C$ for all $z\in \CC^d$. 
\end{enumerate} 
\end{definition}

This property implies that $u_P$ can replace $H_P$ in defining the $L_P$ and $L_P^+$ classes:
$$L_P= \{u\in PSH(\CC^d): u(z)- u_P(z) =0(1), \ |z| \to \infty \}$$
and 
$$L_P^+ =\{u\in L_P: u(z)\geq u_P(z) +c_u\}.$$
Given the existence of a strictly psh $P-$potential, we can follow the proof of Proposition \ref{domprinc2} in \cite{BT88} to prove:

\begin{proposition} \label{gctp} For $P\subset (\RR^+)^d$ satisfying (\ref{phyp}), let $u \in L_P$ and $v \in L_P^+$ with $u \leq v$ a.e. $(dd^cv)^d$. Then $u \leq v$ in $\CC^d$.
\end{proposition}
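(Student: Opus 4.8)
The plan is to reproduce the proof of the classical global domination principle (Proposition~\ref{domprinc2}) from \cite{BT88}, with a strictly psh $P$-potential $u_P$ (Definition~\ref{ppot}, whose existence is the subject of the next section) in the role there played by the K\"ahler potential $u_0(z)=\frac12\log(1+|z|^2)$. Two features of $u_P$ are used: by Definition~\ref{ppot}(2) it may replace $H_P$ in the definitions of $L_P$ and $L_P^+$; and by Definition~\ref{ppot}(1) the measure $(dd^cu_P)^d=c_d\det H(u_P)\,dV$ has a strictly positive continuous density, so $\int_U(dd^cu_P)^d>0$ for every nonempty open $U\subseteq\CC^d$.

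I would begin with two reductions. Since $v\in L_P^+$, the difference $u_P-v=(u_P-H_P)-(v-H_P)$ is bounded above; replacing $u_P$ by $u_P-M$ for large $M$ — which affects neither property in Definition~\ref{ppot} — we may assume $u_P\le v$ on $\CC^d$. Next we may assume $u\in L_P^+$: for $N>0$ the function $u_N:=\max(u,\,v-N)$ lies in $L_P^+$, satisfies $u\le u_N$, and has $\{u_N>v\}=\{u>v\}$ and hence $u_N\le v$ a.e.\ $(dd^cv)^d$; so it suffices to prove $u_N\le v$ for every $N$. Thus from now on $u,v\in L_P^+$ and $u_P\le v$.

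The main step: fix $\epsilon\in(0,1)$ and set $u_\epsilon:=(1-\epsilon)u+\epsilon u_P\in L_P^+$. On $\{u\le v\}$, which carries all of $(dd^cv)^d$, we have $u_\epsilon\le(1-\epsilon)v+\epsilon v=v$, so $u_\epsilon\le v$ a.e.\ $(dd^cv)^d$; and since mixed Monge-Amp\`ere products of locally bounded psh functions are nonnegative, $(dd^cu_\epsilon)^d\ge(1-\epsilon)^d(dd^cu)^d+\epsilon^d(dd^cu_P)^d\ge\epsilon^d(dd^cu_P)^d$. Because $u_\epsilon\to u$ pointwise as $\epsilon\to0$, it is enough to prove $u_\epsilon\le v$ for each $\epsilon$. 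If not, $D_\epsilon:=\{u_\epsilon>v\}$ is nonempty and open; since $u_P\le v$ forces $u_\epsilon\le v$ wherever $u\le v$, we get $D_\epsilon\subseteq\{u>v\}$ and so $(dd^cv)^d(D_\epsilon)=0$. But the global comparison principle for $L_P^+$, applied to $(u_\epsilon,v)$, gives $\int_{D_\epsilon}(dd^cu_\epsilon)^d\le\int_{D_\epsilon}(dd^cv)^d=0$, whereas the inequality above gives $\int_{D_\epsilon}(dd^cu_\epsilon)^d\ge\epsilon^d\int_{D_\epsilon}(dd^cu_P)^d>0$. This contradiction proves $u_\epsilon\le v$; letting $\epsilon\to0$ and undoing the reductions finishes the proof.

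The step that really uses the hypotheses on $P$, and the one I expect to be the main obstacle, is the global comparison principle just invoked: because $D_\epsilon$ need not be relatively compact, the comparison principle for bounded domains does not apply directly. As in \cite{BT88} one argues by mass balance instead. Writing $w:=\max(u_\epsilon,v)\in L_P^+$, one uses (i) that every function in $L_P^+$ carries the same finite total Monge-Amp\`ere mass $\int_{\CC^d}(dd^cH_P)^d$ — which, together with the other basic properties of $L_P^+$ under (\ref{phyp}), is available from \cite{BBL} and \cite{BosLev} — and (ii) the standard inequality $(dd^c\max(u_\epsilon,v))^d\ge\mathbf{1}_{\{u_\epsilon\le v\}}(dd^cv)^d$; comparing the total-mass identities for $w$ and for $v$ across $\CC^d=\{u_\epsilon<v\}\sqcup\{u_\epsilon=v\}\sqcup D_\epsilon$, and using that Monge-Amp\`ere measures agree on open sets where the functions agree, yields precisely $\int_{D_\epsilon}(dd^cu_\epsilon)^d\le\int_{D_\epsilon}(dd^cv)^d$. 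All of this is routine once $u_P$ is in hand; the genuinely new content — and, implicitly, the source of the hypothesis (\ref{phyp}) — is the construction of the strictly psh $P$-potential $u_P$ itself.
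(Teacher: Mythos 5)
Your proposal is correct in substance and follows the same strategy as the paper: both proofs hinge on the strictly psh $P$-potential $u_P$ of Definition \ref{ppot} (whose construction is deferred to the next section in both), perturb by it, and derive a contradiction between the strict positivity of $(dd^cu_P)^d$ on sets of positive Lebesgue measure and the hypothesis that $(dd^cv)^d$ puts no mass on $\{u>v\}$. The implementations differ in one substantive way. You form the convex combination $u_\epsilon=(1-\epsilon)u+\epsilon u_P$ and compare it with $v$ itself; the bad set $D_\epsilon\subseteq\{u>v\}$ may then be unbounded, so you must invoke a global comparison principle for $L_P^+$, which you ground (correctly, but only in sketch) in the equality of total Monge--Amp\`ere masses on $L_P^+$ from \cite{BBL} together with the standard $\max$-inequality. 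The paper instead compares $u+\delta u_P$ with the dilate $(1+\epsilon)v$, $\delta<\epsilon/2$: since $v\ge u_P\to+\infty$ and $u-v$ is bounded above, the set $\{u+\delta u_P>(1+\epsilon)v\}$ is automatically bounded, so the local comparison principle on a bounded domain (Theorem 3.7.1 of \cite{[K]}) applies and no total-mass identity is needed; the price is a short extra computation showing that an a.e.-$(dd^cv)^d$ point of that set would force $v<\tfrac12 u_P$, contradicting the normalization $v\ge u_P$. Your route works but relocates the real effort into the global comparison principle; the paper's dilation trick is the more economical way around it. One small correction: $D_\epsilon=\{u_\epsilon>v\}$ need not be open (a difference of upper semicontinuous functions is not lower semicontinuous in general), but it does have positive Lebesgue measure by the sub-mean-value inequality for $u_\epsilon$ and upper semicontinuity of $v$, and positive Lebesgue measure is all you need to conclude $\int_{D_\epsilon}(dd^cu_P)^d>0$. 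Your preliminary reduction to $u\in L_P^+$ via $\max(u,v-N)$ is sound and indeed necessary on your route, since $u\in L_P$ need not be locally bounded and $(dd^cu_\epsilon)^d$ must be well defined.
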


\begin{proof} Suppose the result is false; i.e., there exists $z_0\in \CC^d$ with $u(z_0)>v(z_0)$. Since $v\in L_P^+$, by adding a constant to $u,v$ we may assume $v(z)\geq u_P(z)$ in $\CC^d$. Note that $(dd^c u_P)^d>0$ on $\CC^d$. Fix $\delta, \ \epsilon>0$ with $\delta <\epsilon /2$ in such a way that the set
$$S:=\{z\in \CC: u(z)+\delta u_P(z)\}>(1+\epsilon)v(z)\}$$
contains $z_0$. Then $S$ has positive Lebesgue measure. Moreover, since $\delta <\epsilon$ and $v\geq u_P$, $S$ is bounded. By the comparison principle (cf., Theorem 3.7.1 \cite{[K]}), we conclude that
$$\int_S (dd^c[u+\delta u_P])^d\leq \int_S (dd^c (1+\epsilon)v)^d.$$
But $\int_S (dd^c \delta u_P)^d>0$ since $S$ has positive Lebesgue measure, so 
$$(1+\epsilon) \int_S (dd^c v)^d>0.$$
By hypothesis, for a.e.-$(dd^cv)^d$ points in supp$(dd^cv)^d\cap S$ (which is not empty since $\int_S (dd^c v)^d>0$), we have
$$(1+\epsilon)v(z)< u(z)+\delta u_P(z)\leq v(z)+\delta u_P(z),$$
i.e., $v(z)< \frac{1}{2}u_P(z)$ since $\delta <\epsilon /2$. This contradicts the normalization $v(z)\geq u_P(z)$ in $\CC^d$.
\end{proof}

In the next section, we show how to construct $u_P$ in Definition \ref{ppot} for a convex body in $(\RR^+)^d$ satisfying (\ref{phyp}).

\section{Existence of strictly psh $P-$potential} 
For the $P$ we consider, $\phi_P\geq 0$ on $(\RR^+)^d$ with $\phi_P(0)=0$. We write $z^J=z_1^{j_1}\cdots z_d^{j_d}$ where  $J=(j_1,...,j_d)\in P$ (the components $j_k$ need not be integers) so that $$H_P(z):=\sup_{J\in P} \log |z^J|:=\phi_P(\log^+ |z_1|,...,\log^+ |z_d|)$$
with $|z^J|:=|z_1|^{j_1}\cdots |z_d|^{j_d}$. To construct a strictly psh $P-$potential $u_P$, we first assume $P$ is a convex polytope in $(\RR^+)^d$ satisfying (\ref{phyp}). Thus $(a_1,0,...,0), \ldots,  (0,...,0,a_d)\in \partial P$ for some $a_1,...,a_d>0$. A calculation shows that 
$$\log (1 + |z_1|^{2a_1} +\cdots + |z_d|^{2a_d})$$
is strictly psh in $\CC^d$.

We claim then that  
\begin{equation}\label{up} u_P(z):= \frac{1}{2} \log( 1+ \sum_{J\in Extr(P)}|z^J|^2)\end{equation} 
is strictly psh in $\CC^d$ and the $L_P, \ L_P^+$ classes can be defined using $u_P$ instead of $H_P$; i.e., $u_P$ satisfies (1) and (2) of Definition \ref{ppot}. Here, $Extr(P)$ denotes the extreme points of $P$ but we omit the origin ${\bf 0}$. Note that $(a_1,0,...,0), \ldots,  (0,...,0,a_d)\in Extr(P)$. 

Indeed, in this case,
$$H_P(z)=\sup_{J\in P} \log |z^J|=\max[0,\max_{J\in Extr(P)} \log |z^J|]$$
so clearly for $|z|$ large, $|u_P(z)-H_P(z)|=0(1)$ while on any compact set $K$, 
$$\sup_{z\in K} |u_P(z)-H_P(z)|\leq C=C(K)$$ 
which gives (2) (and therefore that $u_P\in L_P^+$).

It remains to verify the strict psh of $u_P$ in (\ref{up}). We use reasoning based on a classical univariate result which is exercise 4 in section 2.6 of \cite{Rans}: if $u, v$ are nonnegative with $\log u$ and $\log v$ subharmonic (shm) -- hence $u, v$ are shm -- then 
$\log (u + v)$ is shm. The usual proof is to show $(u + v)^a$ is shm for any $a > 0$ -- which is exercise 3 in section 2.6 of \cite{Rans} -- which trivially 
follows since $u, v$ are shm and $a > 0$. 
However, assume $u, v$ are smooth and compute the Laplacian $\Delta \log (u + v)$ on $\{u,v>0\}$: 
$$\bigl(\log (u + v)\bigr)_{z\bar z} = \frac{(u+v)(u_{z\bar z} + v_{z\bar z})-(u_z+v_z)(u_{\bar z} + v_{\bar z})}{(u+v)^2}$$
$$=\frac{[uu_{z\bar z}- |u_z|^2 +vv_{z\bar z}- |v_z|^2]+[uv_{z\bar z}+vu_{z\bar z}- 2\Re (u_zv_{\bar z})]}{(u+v)^2}.$$
Now $\log u, \ \log v$ shm implies $uu_{z\bar z}- |u_z|^2\geq 0$ and $vv_{z\bar z}- |v_z|^2\geq 0$ with strict inequality in case of strict shm. Since $\log (u + v)$ is shm, the entire numerator is nonnegative:
$$[uu_{z\bar z}- |u_z|^2 +vv_{z\bar z}- |v_z|^2]+[uv_{z\bar z}+vu_{z\bar z}- 2\Re (u_zv_{\bar z})]\geq 0$$
so that the ``extra term'' 
$$uv_{z\bar z}+vu_{z\bar z}- 2\Re (u_zv_{\bar z})$$
is nonnegative whenever 
$$(\log u)_{z\bar z} + (\log v)_{z\bar z}=uu_{z\bar z}- |u_z|^2 +vv_{z\bar z}- |v_z|^2=0.$$
We show $\Delta \log (u+v)$ is strictly positive on $\{u,v>0\}$ if one of $\log u$ or $\log v$ is strictly shm.

\begin{proposition} Let $u, v\geq 0$ with $\log u$ and $\log v$ shm. If one of $\log u$ or $\log v$ is strictly shm, e.g., $\Delta \log u > 0$,  
then $\Delta \log (u+v) > 0$ on $\{u,v>0\}$.
\end{proposition}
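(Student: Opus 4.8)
The plan is to carry out the computation the excerpt began, but on the open set $\Omega:=\{u,v>0\}$ (where, as assumed, $u$ and $v$ are smooth and strictly positive) and after rewriting every derivative of $u$ and $v$ in terms of $\log u$ and $\log v$. Put $\phi:=\log u$ and $\psi:=\log v$ on $\Omega$, so that $u_z=u\phi_z$, $u_{z\bar z}=u(\phi_{z\bar z}+|\phi_z|^2)$, and likewise $v_z=v\psi_z$, $v_{\bar z}=v\psi_{\bar z}$, $v_{z\bar z}=v(\psi_{z\bar z}+|\psi_z|^2)$. Everything below is legitimate because we divide only by $u$, $v$, and $u+v$, all positive on $\Omega$.

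First I would substitute these into the two bracketed pieces of the numerator displayed in the excerpt. The ``main part'' collapses to
$$uu_{z\bar z}-|u_z|^2+vv_{z\bar z}-|v_z|^2=u^2\phi_{z\bar z}+v^2\psi_{z\bar z},$$
and, collecting the $|\phi_z|^2$, $|\psi_z|^2$ and $\Re(\phi_z\psi_{\bar z})$ terms into a single square, the ``extra term'' becomes
$$uv_{z\bar z}+vu_{z\bar z}-2\Re(u_zv_{\bar z})=uv\bigl(\phi_{z\bar z}+\psi_{z\bar z}+|\phi_z-\psi_z|^2\bigr).$$
Adding these, dividing by $(u+v)^2$, and simplifying (the factor $u+v$ comes out of the first two summands), one gets the identity
$$\bigl(\log (u+v)\bigr)_{z\bar z}=\frac{u\,\phi_{z\bar z}+v\,\psi_{z\bar z}}{u+v}+\frac{uv\,|\phi_z-\psi_z|^2}{(u+v)^2}\qquad\text{on }\Omega,$$
which exhibits $\bigl(\log(u+v)\bigr)_{z\bar z}$ as a convex combination of $(\log u)_{z\bar z}$ and $(\log v)_{z\bar z}$ plus a manifestly nonnegative correction. (In particular this shows directly that the ``extra term'' is itself $\ge 0$ on $\Omega$, since $\phi_{z\bar z},\psi_{z\bar z}\ge 0$ by subharmonicity of $\log u$ and $\log v$.)

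With the identity in hand the conclusion is immediate. On $\Omega$ we have $u,v>0$, $\psi_{z\bar z}=(\log v)_{z\bar z}\ge 0$, and the last term is $\ge 0$, hence
$$\bigl(\log (u+v)\bigr)_{z\bar z}\ \ge\ \frac{u\,\phi_{z\bar z}}{u+v}\ =\ \frac{u}{u+v}\,(\log u)_{z\bar z}\ >\ 0,$$
because $(\log u)_{z\bar z}=\tfrac14\Delta\log u>0$ by strict subharmonicity of $\log u$ and $u/(u+v)>0$. Multiplying by $4$ gives $\Delta\log(u+v)>0$ on $\{u,v>0\}$, as claimed; the case where instead $\Delta\log v>0$ follows by symmetry.

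I do not expect a real obstacle: the only nontrivial step is the algebraic verification of the displayed identity, and the excerpt's own computation of $\bigl(\log(u+v)\bigr)_{z\bar z}$ already does most of the work — one just feeds in $u_z=u(\log u)_z$, $u_{z\bar z}=u[(\log u)_{z\bar z}+|(\log u)_z|^2]$ (and the analogues for $v$) and watches the cross terms telescope into $|(\log u)_z-(\log v)_z|^2$. The one point worth stating explicitly is that the whole argument lives on the open set $\{u,v>0\}$, where smoothness, positivity, and the strict inequality $\Delta\log u>0$ are all at our disposal.
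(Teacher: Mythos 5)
Your proof is correct. It performs the same Laplacian computation as the paper but organizes it differently: by substituting $u=e^{\phi}$, $v=e^{\psi}$ on $\{u,v>0\}$ you collapse the numerator into the single identity
$$\bigl(\log(u+v)\bigr)_{z\bar z}=\frac{u\,\phi_{z\bar z}+v\,\psi_{z\bar z}}{u+v}+\frac{uv\,|\phi_z-\psi_z|^2}{(u+v)^2},$$
which checks out algebraically and exhibits $\bigl(\log(u+v)\bigr)_{z\bar z}$ as a convex combination of $(\log u)_{z\bar z}$ and $(\log v)_{z\bar z}$ plus a manifest square, so strict positivity falls out of the $\phi_{z\bar z}$ term alone. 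The paper instead keeps the derivatives of $u$ and $v$, isolates the ``extra term'' $uv_{z\bar z}+vu_{z\bar z}-2\Re(u_zv_{\bar z})$, and proves that this term is strictly positive by combining $uu_{z\bar z}>|u_z|^2$, $vv_{z\bar z}\ge|v_z|^2$ with the auxiliary identity $(uv_z-vu_z)(uv_{\bar z}-vu_{\bar z})\ge 0$. The two arguments rest on the same square: your correction term equals $|uv_z-vu_z|^2/\bigl(uv(u+v)^2\bigr)$, i.e.\ precisely the quantity in the paper's identity (\ref{tada}). What your version buys is that the nonnegativity of the correction is automatic and the strictness is carried by the weighted-average term, so you only need plain (not strict) subharmonicity of $\log v$ at that point; what the paper's version buys is the explicit by-product, stated after its proof, that the extra term itself is strictly positive. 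Both arguments live on the open set $\{u,v>0\}$ and use the smoothness the paper assumes in the surrounding discussion, so there is no gap.
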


\begin{proof} We have $u,v\geq 0$, $u_{z\bar z}, v_{z \bar z}\geq 0$, $vv_{z\bar z}- |v_z|^2\geq 0$ and $uu_{z\bar z}- |u_z|^2>0$ if $u>0$.
We want to show that 
$$uv_{z\bar z}+vu_{z\bar z}- 2\Re (u_zv_{\bar z})=uv_{z\bar z}+vu_{z\bar z}-(u_zv_{\bar z}+v_zu_{\bar z})>0$$
on $\{u,v>0\}$. We start with the identity
\begin{equation}\label{tada}(uv_z-vu_z)(uv_{\bar z}-v u_{\bar z})=u^2v_zv_{\bar z}+v^2 u_z u_{\bar z}-uv(u_zv_{\bar z}+v_zu_{\bar z})\geq 0.\end{equation}
Since $uu_{z\bar z}- |u_z|^2>0$ and $vv_{z\bar z}- |v_z|^2\geq 0$,
$$uu_{z\bar z}>u_zu_{\bar z}, \ vv_{z\bar z}\geq v_z v_{\bar z}.$$
Thus
$$uv_{z\bar z}+vu_{z\bar z}=\frac{u}{v}vv_{z\bar z}+\frac{v}{u}uu_{z\bar z}>\frac{u}{v}v_zv_{\bar z}+\frac{v}{u}u_zu_{\bar z}.$$
Thus it suffices to show
$$\frac{u}{v}v_zv_{\bar z}+\frac{v}{u}u_zu_{\bar z} \geq u_zv_{\bar z}+u_{\bar z}v_z.$$
Multiplying both sides by $uv$, this becomes
$$u^2v_zv_{\bar z}+v^2u_zu_{\bar z} \geq uv(u_zv_{\bar z}+u_{\bar z}v_z).$$
This is (\ref{tada}).

\end{proof}

\noindent This proof actually shows that 
$$uv_{z\bar z}+vu_{z\bar z}- 2\Re (u_zv_{\bar z})>0$$
under the hypotheses of the proposition.

\begin{remark} To be precise, this shows strict shm only on $\{u,v>0\}$. In the multivariate case, this shows the restriction of $\log (u + v)$ to the intersection of a complex line and $\{u,v>0\}$ is strictly shm if one of 
$\log u, \log v$ is strictly psh so that $\log (u + v)$ is strictly psh on $\{u,v>0\}$. \end{remark}

Now with $u_P$ in (\ref{up}) we may write
$$u_P(z)=\log (u+v)$$
where 
\begin{equation}\label{ueqn}u(z)=1 + |z_1|^{2a_1} +\cdots + |z_d|^{2a_d}\end{equation}
 -- so that $\log u$ is strictly psh in $\CC^d$ -- and 
$$v(z)=  \sum_{J\in Extr(P)}|z^J|^2- |z_1|^{2a_1} -\cdots - |z_d|^{2a_d}.$$ If $v\equiv 0$ (e.g., if $P=\Sigma$) we are done. Otherwise $v\geq 0$ (being a sum of nonnegative terms) and $\log v$ is psh (being the logarithm of a sum of moduli squared of holomorphic functions) showing that $u_P(z):= \frac{1}{2} \log( 1+ \sum_{J\in Extr(P)}|z^J|^2)$ is strictly psh where $v>0$. There remains an issue at points where $v=0$ (coordinate axes). However, if we simply replace the decomposition $u_P(z)=\log (u+v)$ by $u_P(z)=\log (u_{\epsilon}+v_{\epsilon})$ where
$$u_{\epsilon}:= 1 + (1-\epsilon)(|z^a_1|^2+....+|z^a_d|^2) \ \hbox{and} $$
$$v_{\epsilon}:= \sum_{J\in Extr P}|z^J|^2-(1-\epsilon)(|z^a_1|^2+....+|z^a_d|^2)$$
for $\epsilon>0$ sufficiently small, then the result holds everywhere. We thank F. Piazzon for this last observation.

If $P\subset (\RR^+)^d$ is a convex body satisfying (\ref{phyp}), we can approximate $P$ by a monotone decreasing sequence of convex polytopes $P_n$ satisfying the same property. Since $P_{n+1}\subset P_n$ and $\cap_n P_n =P$, the sequence $\{u_{P_n}\}$ decreases to a function $u\in L_P^+$. Since each $u_{P_n}$ is of the form 
$$u_{P_n}(z)=\log (u_n+v_n)$$
where $u_n(z)=1 + |z_1|^{2a_{n1}} +\cdots + |z_d|^{2a_{nd}}$ and $a_{nj}\geq a_j$ for all $n$ and each $j=1,...,d$ in (\ref{ueqn}), it follows that $u=:u_P$ is strictly psh and hence satisfies Definition \ref{ppot}. This concludes the proof of Proposition \ref{gctp}.

\begin{remark} Another construction of a strictly psh $P-$potential as in Definition \ref{ppot} which is based on solving a real Monge-Amp\`ere equation and which works in more general situations was recently given by C. H. Lu \cite{CL}. Indeed, his construction, combined with Corollary 3.10 of \cite{DDL}, yields a new proof of the global domination principle, Proposition \ref{gctp}.

\end{remark}

\section{The product property}

In this section, we prove the product property stated in the introduction:
\begin{proposition} \label{productproperty} For $P\subset (\RR^+)^d$ satisfying (\ref{phyp}), let $E_1,...,E_d\subset \CC$ be compact and nonpolar. Then
\begin{equation}\label{prodprop} V^*_{P,E_1\times \cdots \times E_d}(z_1,...,z_d)=\phi_P(V^*_{E_1}(z_1),...,V^*_{E_d}(z_d)).\end{equation}
\end{proposition}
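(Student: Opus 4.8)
The plan is to prove \eqref{prodprop} by playing its two sides against each other through the global $P$-domination principle. Write $E:=E_1\times\cdots\times E_d$, $V:=V^*_{P,E}$, and let $v(z):=\phi_P(V^*_{E_1}(z_1),\dots,V^*_{E_d}(z_d))$ be the right-hand side of \eqref{prodprop}; the goal is $v=V$. Since $\phi_P(x)=\sup_{p\in P}x\cdot p$ and each $V^*_{E_j}\ge 0$, we have $v=\sup_{p\in P}v_p$ with $v_p(z):=\sum_{j=1}^d p_jV^*_{E_j}(z_j)$, each $v_p$ locally bounded and plurisubharmonic (a nonnegative combination of functions psh in one variable each), while $v$ is upper semicontinuous because $\phi_P$ is continuous and nondecreasing in each variable and the $V^*_{E_j}$ are usc; hence $v\in PSH(\CC^d)$. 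As $\phi_P$ is the support function of the bounded set $P$ it is globally Lipschitz, and $|V^*_{E_j}(z_j)-\log^+|z_j||$ is bounded since $V^*_{E_j}\in L^+(\CC)$; therefore $|v(z)-H_P(z)|=|v(z)-\phi_P(\log^+|z_1|,\dots,\log^+|z_d|)|\le C$, so $v\in L_P^+$.

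Next I would record the behaviour on $E$. For $z\in E$ one has $z_j\in E_j$, hence $V^*_{E_j}(z_j)=0$ off a polar set $N_j\subset E_j$; so $v(z)=\phi_P(0,\dots,0)=0$ off the pluripolar set $\bigcup_j\{z:z_j\in N_j\}$, i.e.\ $v=0$ q.e.\ on $E$. On the other hand, $E$ is a product of nonpolar compacta, hence nonpluripolar, so by \cite{BBL} and \cite{BosLev} the function $V$ lies in $L_P^+$, $V=0$ q.e.\ on $E$, and $(dd^cV)^d=0$ on $\CC^d\setminus E$.

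The heart of the matter is the analogous vanishing for $v$: $(dd^cv)^d=0$ on $\CC^d\setminus E$. Since $V^*_{E_j}$ depends only on $z_j$, the positive current $T_j:=dd^c\big(V^*_{E_j}(z_j)\big)$ on $\CC^d$ is the pullback of the equilibrium mass $dd^cV^*_{E_j}$ of $E_j$ under the $j$-th coordinate projection; thus $T_j\wedge T_j=0$, and expanding $(dd^cv_p)^d=\big(\sum_j p_jT_j\big)^d$ only the squarefree terms survive, giving $(dd^cv_p)^d=d!\,(p_1\cdots p_d)\,T_1\wedge\cdots\wedge T_d$. This is a product measure supported on $\mathrm{supp}(dd^cV^*_{E_1})\times\cdots\times\mathrm{supp}(dd^cV^*_{E_d})\subset E_1\times\cdots\times E_d=E$, because each $V^*_{E_j}$ is harmonic off $E_j$. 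Hence each $v_p$ is maximal on the open set $\CC^d\setminus E$. By Choquet's lemma $v=(\sup_k v_{p_k})^*$ for a countable $\{p_k\}\subset P$, and $\sup_k v_{p_k}$ is the (locally bounded) increasing limit of $\max(v_{p_1},\dots,v_{p_m})$; since finite maxima of maximal locally bounded psh functions are maximal, increasing limits of such (when locally bounded) are maximal, and upper regularization on a pluripolar set does not alter a psh function, $v$ is maximal on $\CC^d\setminus E$, i.e.\ $(dd^cv)^d=0$ there.

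Finally, both $(dd^cv)^d$ and $(dd^cV)^d$ are supported on $E$ and charge no pluripolar set, and $v=V=0$ q.e.\ on $E$; so $v\le V$ a.e.\ $(dd^cV)^d$ and $V\le v$ a.e.\ $(dd^cv)^d$. Applying Proposition \ref{gctp} first with the $L_P$-function $v$ and the $L_P^+$-function $V$, then with the $L_P$-function $V$ and the $L_P^+$-function $v$, yields $v\le V$ and $V\le v$ on $\CC^d$, hence $v=V$, which is \eqref{prodprop}. I expect the third step to be the main obstacle: beyond the one-variable computation of $(dd^cv_p)^d$, it rests on the standard but somewhat delicate stability of maximality under finite maxima, increasing limits, and upper regularization. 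An alternative for that step is a direct local computation of the density of $(dd^cv)^d$ near a point of $\CC^d\setminus E$, exploiting that $\phi_P$, being positively homogeneous of degree one, has degenerate Hessian, together with $V^*_{E_j}\,dd^cV^*_{E_j}\equiv 0$ as a measure; and one could instead derive $v\le V$ from the ``quasi-everywhere competitor'' lemma, but routing both inequalities through Proposition \ref{gctp} parallels the classical use of Proposition \ref{domprinc2}.
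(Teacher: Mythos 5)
Your overall skeleton is the same as the paper's: show $v:=\phi_P(V^*_{E_1},\dots,V^*_{E_d})\in L_P^+$, show $v=0$ q.e.\ on $E$ and $(dd^cv)^d=0$ off $E$, then conclude with the domination principle (the paper gets $v\le V^*_{P,E}$ directly from the envelope definition and uses Proposition \ref{gctp} only for the reverse inequality, but running both inequalities through Proposition \ref{gctp} as you do is fine). The problem is the step you yourself identify as the heart of the matter. The lemma you invoke there --- ``finite maxima of maximal locally bounded psh functions are maximal'' --- is false. The cleanest counterexample is exactly of the type occurring in your argument: each of $u_1(z)=V^*_{E_1}(z_1)$ and $u_2(z)=V^*_{E_2}(z_2)$ has identically vanishing Monge--Amp\`ere on all of $\CC^2$ (each depends on one variable, so $dd^cu_j$ has rank one), yet $\max(u_1,u_2)=V^*_{E_1\times E_2}$ carries the nonzero measure $\mu_{E_1}\times\mu_{E_2}$. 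Even more elementarily, $0$, $\Re z_1$, $\Re z_2$ are pluriharmonic on $\CC^2$, but $\max(0,\Re z_1,\Re z_2)=\phi_\Sigma(\Re z_1,\Re z_2)$ has $(dd^c\,\cdot\,)^2$ equal to a positive multiple of $\delta_0(x_1)\,\delta_0(x_2)\,dy_1\,dy_2$, because the real Monge--Amp\`ere mass of the convex function $\max(0,x_1,x_2)$ (the area of its gradient image) is $1/2$. A maximum of maximal functions acquires extra Monge--Amp\`ere mass where several of the competitors meet, and the entire content of the proposition is to show that for the family $\{v_p\}_{p\in P}$ this extra mass lands inside $E$. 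Your argument simply asserts this away.

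In the two examples above the extra mass does happen to sit on $E$ (on $\partial E_1\times\partial E_2$, resp.\ at the origin of the $x$-plane), and that is no accident: what saves the day is that $\phi_P$ is positively homogeneous of degree one, so $\det H_{\RR}(\phi_P)=0$ away from the origin of $\RR^d$, while $(V^*_{E_1}(z_1),\dots,V^*_{E_d}(z_d))=(0,\dots,0)$ only (q.e.) on $E$; one also needs the vanishing of the pure second derivatives $(\phi_P)_{x_jx_j}$ along the coordinate axes to handle points where some but not all coordinates lie in the corresponding $E_j$. This is precisely the ``alternative'' you relegate to your closing sentence, and it is the computation the paper actually carries out: after smoothing, it writes out $v_{z\bar z}v_{w\bar w}-|v_{z\bar w}|^2$ explicitly and kills each term using monotonicity of $\phi_P$, the degeneracy of its real Hessian off the origin, and $(V^*_{E_j})_{z\bar z}=0$ off $E_j$. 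To repair your proof you would have to replace the false general lemma by such an argument (or by the lower-set combinatorics of \cite{BosLev}); as written, the third step does not go through.
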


\begin{remark} One can verify the formula 
$$V_{P,T^d}(z)=H_P(z) =\sup_{ J\in P} \log |z^{ J}|$$
for the $P-$extremal function of the torus 
$$T^d:=\{(z_1,...,z_d): |z_j|=1, \ j=1,...,d\}$$
for a general convex body by modifying the argument in \cite{[K]} for the standard extremal function of a ball in a complex norm. Indeed, let $u\in L_P$ with $u\leq 0$ on $T^d$. For $w=(w_1,...,w_d)\not \in T^d$ and $w_j\not =0$, we consider 
$$v(\zeta_1,...,\zeta_d):=u(w_1/\zeta_1,...,w_d/\zeta_d) - H_P( w_1/\zeta_1,...,w_d/\zeta_d).$$
This is psh on $0< |\zeta_j|< |w_j|, \ j=1,...,d$. Since $u\in L_P$, $v$ is bounded above near the pluripolar set given by the union of the coordinate planes in this polydisk and hence extends to the full polydisk. On the boundary $|\zeta_j|= |w_j|$, $v\leq 0$ so at $(1,1,...,1)$ we get $u(w_1,...,w_d) \leq H_P( w_1,...,w_d)$. Note
$$H_P(z)=\sup_{ J\in P} \log |z^{ J}|=\phi_P(\log^+|z_1|,...,\log^+|z_d|)$$
and $V_{T^1}(\zeta)=\log^+|\zeta|$ so this is a special case of Proposition \ref{productproperty}.
\end{remark}

\begin{proof} 
For simplicity we consider the case $d=2$ with variables $(z,w)$ on $\CC^2$. As in \cite{BosLev}, we may assume $V_E$ and $V_F$ are continuous. Also, by approximation we may assume $\phi_P$ is smooth. We write
$$v(z,w):=\phi_P(V_E(z),V_F(w)).$$
An important remark is that, since $P\subset (\RR^+)^2$, $P$ is convex, and $P$ contains $k\Sigma$ for some $k>0$, the function $\phi_P$ on $(\RR^+)^2$ satisfies
\begin{enumerate}
\item $\phi_P\geq 0$ and $\phi_P(x,y)=0$ only for $x=y=0$;
\item $\phi_P$ is nondecreasing in each variable; i.e., $(\phi_P)_x, (\phi_P)_y\geq 0$;
\item $\phi_P$ is convex; i.e., the real Hessian $H_{\RR}(\phi_P)$ of $\phi_P$ is positive semidefinite; and, more precisely, by the homogenity of $\phi_P$; i.e., $\phi_P(tx,ty)=t\phi_P(x,y)$, 
$$\det H_{\RR}(\phi_P)=0 \ \hbox{away from the origin.}$$
\end{enumerate}

As in \cite{BosLev}, to see that 
$$v(z,w)\leq V_{P,E\times F}(z,w),$$
since $\phi_P(0,0)=0$, it suffices to show that $\phi_P(V_{E}(z),V_{F}(w))\in L_P(\CC^2)$. From the definition of $\phi_P$, 
$$\phi_P(V_{E}(z),V_{F}(w))=\sup_{(x,y)\in P} [xV_E(z)+yV_F(w)]$$
which is a locally bounded above upper envelope of plurisubharmonic functions. As $\phi_P$ is convex and $V_E, V_F$ are continuous, $\phi_P(V_{E}(z),V_{F}(w))$ is continuous. Since $V_E(z)=\log |z|+0(1)$ as $|z|\to \infty$ and $V_F(w)=\log |w|+0(1)$ as $|w|\to \infty$, it follows that 
$\phi_P(V_{E}(z),V_{F}(w))\in L_P(\CC^2)$. 

By Proposition \ref{gctp}, it remains to show $(dd^cv)^2=0$ outside of $E\times F$. Since we can approximate $v$ from above uniformly by a decreasing sequence of smooth psh functions by convolving $v$ with a smooth bump function, we assume $v$ is smooth and compute the following derivatives:
$$v_z=(\phi_P)_x (V_E)_z, \ v_w=(\phi_P)_y (V_F)_w;$$
$$v_{z\bar z} = (\phi_P)_{xx} |(V_E)_z|^2+ (\phi_P)_x (V_E)_{z\bar z};$$
$$v_{z\bar w} = (\phi_P)_{xy} (V_E)_z (V_F)_{\bar w};$$
$$v_{w\bar w} = (\phi_P)_{yy} |(V_F)_w|^2+ (\phi_P)_y (V_F)_{w\bar w}.$$
It follows from (2) that $v_{z\bar z}, v_{w\bar w}\geq 0$. Next, we compute the determinant of the complex Hessian of $v$ on $(\CC\setminus E)\times (\CC\setminus F)$ (so $(V_E)_{z\bar z}=(V_F)_{w\bar w}=0$):
$$v_{z\bar z}v_{w\bar w}-|v_{z\bar w}|^2$$
$$=(\phi_P)_{xx} |(V_E)_z|^2(\phi_P)_{yy} |(V_F)_w|^2-[(\phi_P)_{xy}]^2 |(V_E)_z|^2 |(V_F)_w|^2=$$
$$=|(V_E)_z|^2|(V_F)_w|^2 [(\phi_P)_{xx}(\phi_P)_{yy}-(\phi_P)_{xy}]^2].$$
This is nonnegative by the convexity of $\phi_P$ and, indeed, it vanishes on $(\CC\setminus E)\times (\CC\setminus F)$ 
by (3). The general formula for the determinant of the complex Hessian of $v$ is
$$v_{z\bar z}v_{w\bar w}-|v_{z\bar w}|^2$$
$$=|(V_E)_z|^2|(V_F)_w|^2 [(\phi_P)_{xx}(\phi_P)_{yy}-(\phi_P)_{xy}]^2]+(\phi_P)_{xx} |(V_E)_z|^2  (\phi_P)_y (V_F)_{w\bar w}$$
$$+ (\phi_P)_{yy} |(V_F)_w|^2(\phi_P)_x (V_E)_{z\bar z}+(\phi_P)_x (V_E)_{z\bar z}(\phi_P)_y (V_F)_{w\bar w}.$$
If, e.g., $z\in E$ and $w\in  (\CC\setminus F)$, 
$$|(V_E)_z|^2|(V_F)_w|^2 [(\phi_P)_{xx}(\phi_P)_{yy}-(\phi_P)_{xy}]^2]=0$$
by (3) (since $(V_E(z),V_F(w))=(0,a)\not =(0,0)$) and $(V_F)_{w\bar w}=0$ so 
$$v_{z\bar z}v_{w\bar w}-|v_{z\bar w}|^2=(\phi_P)_{yy} |(V_F)_w|^2(\phi_P)_x (V_E)_{z\bar z}.$$
However, we claim that 
$$(\phi_P)_{yy}(0,a)=0 \ \hbox{if} \ a >0$$
since we have $\phi_P (0,ty)=t\phi_P(0,y)$. Hence 
$$v_{z\bar z}v_{w\bar w}-|v_{z\bar w}|^2=0$$
if $z\in E$ and $w\in  (\CC\setminus F)$. Similarly, 
$$(\phi_P)_{xx}(a,0)=0 \ \hbox{if} \ a >0$$ 
so that 
$$v_{z\bar z}v_{w\bar w}-|v_{z\bar w}|^2=0$$
if $z\in (\CC\setminus E)$ and $w\in F$. 

\end{proof}

\begin{remark} In \cite{BosLev}, a (much different) proof of Proposition \ref{productproperty} was given under the additional hypothesis that $P\subset (\RR^+)^d$ be a {\it lower set}: for each $n=1,2,...$, whenever $(j_1,...,j_d) \in nP\cap (\ZZ^+)^d$ we have $(k_1,...,k_d) \in nP\cap (\ZZ^+)^d$ for all $k_l\leq j_l, \ l=1,...,d$.

\end{remark}

Finally, although computation of the $P-$extremal function of a product set is now rather straightforward, even qualitative properties of the corresponding Monge-Amp\`ere measure are less clear. To be concrete, for $q\ge1$, let 
\begin{equation}\label{ellq}
P_q:=\{(x_1,...,x_d): x_1,..., x_d\geq 0, \ x_1^q+\cdots +x_d^q\leq 1\}
\end{equation}
be the $(\RR^+)^d$ portion of an $\ell^q$ ball. Then for $1/q'+1/q=1$ we have $\phi_{P_q}(x)=||x||_{\ell^{q'}}$ (for $q =\infty$ we take $q'  =1$ and vice-versa). Hence if $E_1,...,E_d\subset \CC$, 
\begin{align*}
V^*_{P_q,E_1\times \cdots \times E_d}(z_1,...,z_d)&=\|[V^*_{E_1}(z_1),V^*_{E_2}(z_2),
\ldots V^*_{E_d}(z_d)]\|_{\ell^{q'}}\\
&=
[V^*_{E_1}(z_1)^{q'}+\cdots +V^*_{E_d}(z_d)^{q'}]^{1/q'}.
\end{align*}

In the standard case $q=1$, $P_1=\Sigma$ and we have the well-known result that
$$V^*_{E_1\times \cdots \times E_d}(z_1,...,z_d)=\max[V^*_{E_1}(z_1),V^*_{E_2}(z_2),
\ldots, V^*_{E_d}(z_d)].$$
Then if none of the sets $E_j$ are polar,
$$(dd^c V^*_{E_1\times \cdots \times E_d})^d=\mu_{E_1}\times \cdots \times \mu_{E_d}$$
where $\mu_{E_j}=\Delta V^*_{E_j}$ is the classical equilibrium measure of $E_j$. 

\begin{question} What can one say about supp$(dd^c V^*_{P_q,E_1\times \cdots \times E_d})^d$ in the case when $q>1$? 
\end{question}

As examples, for $T^d=\{(z_1,...,z_d): |z_j|=1, \ j=1,...,d\}$ we have $V_T(z_j)=\log^+|z_j|$ and hence for $q\geq 1$  
$$V_{P_q,T^d}(z)=\phi_{P_q}(\log^+|z_1|,...,\log^+|z_d|)=[\sum_{j=1}^d (\log^+|z_j|)^{q'}]^{1/q'}.$$
The measure $(dd^cV_{P_q,T^d})^d$ is easily seen to be invariant under the torus action and hence is a positive constant times Haar measure on $T^d$. Thus in this case supp$(dd^cV_{P_q,T^d})^d=T^d$ for $q\geq 1$.

For the set $[-1,1]^d$ we have $V_{[-1,1]}(z_j)=\log|z_j+\sqrt{z_j^2-1}|$ and hence for $q\geq 1$
$$
 V_{P_q,[-1,1]^d}(z_1,...,z_d)=\left\{\sum_{j=1}^d \left(\log\left|z_j+\sqrt{z_j^2-1}\right|\right)^{q'}\right\}^{1/q'}.
$$
In this case, it is not clear for $q>1$ whether supp $(dd^c  V_{P_q,[-1,1]^d})^d=[-1,1]^d$.

\end{document}